\DeclareMathOperator*{\argmax}{arg\,max}
\numberwithin{equation}{section}
\newcommand{\R}{\mathbb R}
\newcommand{\N}{\mathbb N}
\definecolor{darkgreen}{rgb}{0,.6,0}
\long\def\@makefnmark{%
        \hbox {\@textsuperscript {\@thefnmark}}
        }
\definecolor{MyDarkBlue}{rgb}{0,0.29,0.7}
\newtheoremstyle{plain}% name of the style to be used
  {10pt}% measure of space to leave above the theorem. E.g.: 3pt
  {10pt}% measure of space to leave below the theorem. E.g.: 3pt
  {\it }% name of font to use in the body of the theorem
  {0pt}% measure of space to indent
  {\bf }% name of head font
  {}% punctuation between head and body
  {\newline}% space after theorem head; " " = normal interword space
  {}
\newtheoremstyle{definition}% name of the style to be used
  {10pt}% measure of space to leave above the theorem. E.g.: 3pt
  {10pt}% measure of space to leave below the theorem. E.g.: 3pt
  {}% name of font to use in the body of the theorem
  {0pt}% measure of space to indent
  {\bf }% name of head font
  {}% punctuation between head and body
  {\newline}% space after theorem head; " " = normal interword space
  {}
\theoremstyle{plain}
\newtheorem{theorem}{Theorem}[section]
\newtheorem{prop}[theorem]{Proposition}
\newtheorem*{theorem*}{Theorem}
\newtheorem*{lemma*}{Lemma}
\theoremstyle{definition}
\newtheorem*{example}{Example}
\newtheorem*{remark}{Remark}
\xpatchcmd{\@makefnstartbox}{%
\footnotesize}%
{\footnotesize\sffamily}{}{}
\xpatchcmd{\@makefnmark}{\normalfont}{\sffamily}{}{}
\begin{document}

\title{Counting Nodes in Smolyak Grids}
\author[1]{Jocelyn Minini}
\author[1,2]{Micha Wasem}
\affil[1]{\normalsize School of Engineering and Architecture, HES-SO University of Applied Sciences and Arts Western Switzerland, P\'erolles 80, 1700 Fribourg, Switzerland\\{\tt jocelyn.minini@hefr.ch}, {\tt micha.wasem@hefr.ch}}
\affil[2]{\normalsize Faculty of Mathematics and Computer Science, UniDistance Suisse, Schinerstrasse 18, 3900 Brig, Switzerland\\{\tt micha.wasem@fernuni.ch}}
\date{March 29, 2025}
\maketitle

\begin{abstract}
Using generating functions, we are proposing a unified approach to produce explicit formulas, which count the number of nodes in Smolyak grids based on various univariate quadrature or interpolation rules. Our approach yields, for instance, a new formula for the cardinality of a Smolyak grid, which is based on Chebyshev nodes of the first kind and it allows to recover certain counting-formulas previously found by Bungartz-Griebel, Kaarnioja, Müller-Gronbach, Novak-Ritter and Ullrich. \end{abstract}
%{\sffamily\tableofcontents}

\textbf{Keywords.} Smolyak rule, Smolyak grid, Generating functions % List of keywords

\textbf{2020 Mathematics Subject Classification.} 05A15,
05C69, 65D15
\section{ Introduction}

The aim of this article is to fill a gap in the literature by presenting a general framework which allows to produce explicit formulas for the number of nodes in a Smolyak grid.

Let $U_i$ be a quadrature rule on $[-1,1]$, i.e.\ $U_i : C^0([-1,1])\to\mathbb R$ or an interpolation rule i.e.\ $U_i: C^0([-1,1])\to C^0([-1,1])$ which evaluates an unknown function $g:[-1,1]\to\R$ on a finite set $S_i\subset [-1,1]$. The Smolyak rule turns a sequence of univariate quadrature or interpolation rules $U_1,U_2,\ldots $ into a quadrature or interpolation rule on $[-1,1]^d$. Here, $|S_i|\leqslant |S_{i+1}|$ for all $i\in \mathbb N_{\geqslant 1}$ and the Smolyak rule in $d$ dimensions of \emph{level} $\mu\in\mathbb N$ is defined as
\begin{equation}\label{smolyak-rough}
Q^d_\mu = \sum_{d\leqslant |\mathbf i|\leqslant d+\mu}\bigotimes_{k=1}^d \Delta_{i_k},
\end{equation}
where $\Delta_{i}=U_{i}-U_{i-1}$ as long as $i>2$ and $\Delta_1 = U_1$, $\mathbf i\in \N^d_{\geqslant 1}$ is a multi-index with positive entries and $|\mathbf i|:=i_1+\ldots + i_d$ denotes its $1$-norm  (see \cite{smolyak}). \cite{kaarnioja_smolyak_2013} shows that \eqref{smolyak-rough} includes a lot of cancellation and can be expressed as (see also \citet[Lemma 1]{wasilkowski_explicit_1995})
$$
Q^d_\mu = \sum_{\max\{d,\mu+1\}\leqslant |\mathbf i|\leqslant d+\mu}(-1)^{d+\mu-|\mathbf i|}\binom{d-1}{d+\mu-|\mathbf i|}\bigotimes_{k=1}^d U_{i_k}.
$$
This last expression implies in which nodes an unknown function $u:[-1,1]^d\to\R$ needs to be evaluated, if $Q^d_{\mu}(u)$ is computed: We attach to the sequence $U_1,U_2,\ldots$ of univariate quadrature or interpolation rules the sequence $\mathcal S = S_1, S_2, \ldots $ of sets $S_i\subset[-1,1]$ of evaluation nodes such that $f(i) = |S_i|$ is a non-decreasing function $f:\N_{\geqslant 1}\to \N_{\geqslant 1}$, called \emph{growth function} of $\mathcal S$. The Smolyak grid in $d$ dimensions with \emph{level} $\mu$ is then given by
$$
\Gamma_d(\mu) = \bigcup_{\max\{d,\mu+1\}\leqslant |\mathbf{i}|\leqslant d+\mu}\bigtimes_{k=1}^d S_{i_k}.
$$
If the sequence $\mathcal S$ is \emph{nested}, i.e.\ if
$
S_k\subset S_{k+1}$ for all $k\in \mathbb N_{\geqslant 1}$, then
$$
\Gamma_d(\mu) = \bigcup_{|\mathbf{i}|= d+\mu}\bigtimes_{k=1}^d S_{i_k}.
$$
Since the Smolyak rule has been introduced to overcome the curse of dimensionality, there is great interest in determining its computational cost i.e.\ to count the number of nodes in such a grid for a given sequence $\mathcal S$, a given dimension $d$ and a level $\mu$. This number will be denoted by $\mathrm N_d(\mu)$, where the growth function $f$ of the sequence $\mathcal S$ will be clear from the context. For our purposes, we consider isotropic grids but we remark that our counting argument at no point needs spatial isotropy but only that the sets used in each dimension share the same growth function.

For practical implementation, it might be of interest to count the number of nodes needed during the generating process of such grids, before duplicates are deleted, i.e.\ there might be interest in finding a simpler formula for
$$
\mathcal N^{\Sigma}_d(\mu)=\sum_{\max\{d,\mu+1\}\leqslant |\mathbf{i}|\leqslant d+\mu}\prod_{k=1}^df(i_k),
$$
which doesn't require to compute all the multi-indices $\mathbf i\in \mathbb N_{\geqslant 1}^d$ such that $\max\{d,\mu+1\}\leqslant |\mathbf{i}|\leqslant d+\mu$. The number $\mathcal N^{\Sigma}_d(\mu)$ is of course independent of the nestedness or non-nestedness of the sequence $\mathcal S$, however, if the sequence is nested, counting points in the Smolyak grid with duplicates boils down to finding a formula for
$$
\mathcal N_d(\mu)=\sum_{|\mathbf{i}|= d+\mu}\prod_{k=1}^df(i_k).
$$
We will provide formulas for the quantities $\mathrm N_d(\mu)$ for the case of $\mathcal S$ being nested and $\mathcal N_d(\mu)$ for general $\mathcal S$ only as 
$$
\mathcal N^{\Sigma}_d(\mu) =\!\!\!\!\! \sum_{k=\max\{0,\mu+1-d\}}^\mu\!\!\!\!\! \mathcal N_d(k).
$$
%Whenever $f$ is clear from the context, we will suppress the dependence on $f$ and write for instance $\mathrm N_d(\mu):=\mathrm N_d(d,\mu,f)$.

For some specific growth functions, there are explicit formulas for $\mathrm N_d(\mu)$ available in the literature, see for instance \cite{kaarnioja_smolyak_2013,ullrich,Bungartz_Griebel_2004}. Also, a lot is known about upper bounds for $\mathrm N_d(\mu)$ or asymptotic formulas \citep{ullrich,kaarnioja_smolyak_2013,xu_weak_2015,muller-gronbach_hyperbolic_1998,wasilkowski_explicit_1995,novak_simple_1999}. The contribution of \cite{burkardt} provides tables with values for $\mathrm N_d(\mu)$ and code which allows to generate them. Also, in \cite{judd}, tables with values are provided.

Our approach allows to find closed formulas for the quantities $\mathrm N_d(\mu)$ and $\mathcal N_d(\mu)$ by using generating functions and dimension-wise induction, thus generalizing previous formulas obtained by of \cite{novak_simple_1999,muller-gronbach_hyperbolic_1998,Bungartz_Griebel_2004} and \cite{ullrich}. To this end, we define
$$
G_d(x) = \sum_{\mu=0}^\infty\mathrm N_d(\mu)x^\mu \text{ and }\mathcal G_d(x) = \sum_{\mu=0}^\infty\mathcal N_d(\mu)x^\mu.
$$
We are now ready to state our main result which will be restated for the convenience of the reader in Propositions \ref{main1} and \ref{main2}:
\begin{theorem}\label{main}
It holds that
$
G_d(x) = G_1(x)^d(1-x)^{d-1} \text{ and }\mathcal G_d(x) = \mathcal G_1(x)^d,
$
where
$$
G_1(x) = \mathcal G_1(x) = \sum_{\ell=0}^\infty f(\ell+1)x^\ell.
$$
\end{theorem}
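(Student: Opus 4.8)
The plan is to prove the two product formulas separately, beginning with the simpler generating function $\mathcal G_d$, where no geometric cancellation of duplicate nodes is involved. Since $\mathcal G_1(x)=\sum_{\ell=0}^\infty f(\ell+1)x^\ell$, expanding the $d$-th power and collecting the coefficient of $x^\mu$ gives
$$
[x^\mu]\,\mathcal G_1(x)^d=\sum_{\ell_1+\cdots+\ell_d=\mu}\ \prod_{k=1}^d f(\ell_k+1).
$$
Performing the shift $i_k=\ell_k+1\geqslant 1$ turns the constraint $\sum_k\ell_k=\mu$ into $\sum_k i_k=d+\mu$, so this coefficient is exactly $\mathcal N(d,\mu,f)$ by definition. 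Hence $\mathcal G_d(x)=\mathcal G_1(x)^d$ follows at once, with no reliance on nestedness.

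The identity for $G_d$ requires controlling the \emph{distinct}-node count $\mathrm N(d,\mu,f)$, and here the plan is to classify each node by the level at which it first appears. Since $\mathcal S$ is nested, I would set $T_1=S_1$ and $T_j=S_j\setminus S_{j-1}$ for $j\geqslant 2$, so that every node lies in exactly one $T_j$, and $|T_j|=g(j)$ where $g(1)=f(1)$ and $g(j)=f(j)-f(j-1)$. A node $(p_1,\ldots,p_d)$ with $p_k\in T_{j_k}$ then satisfies $p_k\in S_{i_k}$ precisely when $i_k\geqslant j_k$, which reduces membership in the grid to a purely combinatorial condition on the level vector $(j_1,\ldots,j_d)$.

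The central step is the equivalence: such a node lies in $\Gamma(d,\mu)$ if and only if $\sum_k j_k\leqslant d+\mu$. Indeed, the node belongs to $\Gamma(d,\mu)=\bigcup_{|\mathbf i|=d+\mu}\bigtimes_k S_{i_k}$ exactly when there is a multi-index $\mathbf i$ with $|\mathbf i|=d+\mu$ and $i_k\geqslant j_k$ for all $k$; since the totals $\sum_k i_k$ attainable under $i_k\geqslant j_k$ are precisely the integers $\geqslant\sum_k j_k$, such an $\mathbf i$ exists iff $\sum_k j_k\leqslant d+\mu$. This is the one place where nestedness is genuinely used: it is what lets each node be assigned a single well-defined appearance level and makes membership monotone in the indices. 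Summing $\prod_k|T_{j_k}|$ over all admissible level vectors and shifting $\ell_k=j_k-1$ yields
$$
\mathrm N(d,\mu,f)=\sum_{\substack{\ell_1,\ldots,\ell_d\geqslant 0\\ \ell_1+\cdots+\ell_d\leqslant\mu}}\ \prod_{k=1}^d g(\ell_k+1).
$$

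Finally I would pass to generating functions. Writing $H(x)=\sum_{\ell=0}^\infty g(\ell+1)x^\ell$, the unrestricted convolution gives $[x^m]H(x)^d=\sum_{\ell_1+\cdots+\ell_d=m}\prod_k g(\ell_k+1)$, and the inequality $\sum_k\ell_k\leqslant\mu$ corresponds to taking partial sums, i.e.\ to multiplication by $(1-x)^{-1}$; hence $G_d(x)=H(x)^d/(1-x)$. A short telescoping computation using $g(j)=f(j)-f(j-1)$ shows $H(x)=(1-x)G_1(x)$, and substituting produces $G_d(x)=(1-x)^{d-1}G_1(x)^d$, as claimed. I expect the only real obstacle to be establishing the level-vector equivalence cleanly, including the verification that the minimum sum $\sum_k j_k$ can always be padded up to the exact target $d+\mu$; once that combinatorial characterization is secured, the remaining generating-function manipulations are routine. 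The same multiplicative structure could alternatively be obtained by induction on $d$ via a convolution recursion, but the direct count by appearance level seems more transparent.
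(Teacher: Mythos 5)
Your proposal is correct, but for the nested count $\mathrm N(d,\mu,f)$ you take a genuinely different route from the paper. The paper peels off only the last coordinate: it writes $\Gamma(d+1,\mu)$ as the disjoint union $\Gamma(d,\mu)\times S_1\cup\bigcup_{\ell=1}^\mu\Gamma(d,\mu-\ell)\times(S_{\ell+1}\setminus S_\ell)$, derives the convolution recursion $\mathrm N(d+1,\mu)=f(1)\mathrm N(d,\mu)+\sum_{\ell=1}^\mu(f(\ell+1)-f(\ell))\mathrm N(d,\mu-\ell)$, and converts this into $G_{d+1}(x)=G_d(x)G_1(x)(1-x)$ via a Cauchy-product computation, concluding by induction on $d$. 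You instead apply the ``first appearance level'' decomposition $T_j=S_j\setminus S_{j-1}$ in \emph{all} coordinates simultaneously, characterize membership of a node with level vector $(j_1,\ldots,j_d)$ in $\Gamma(d,\mu)$ by the single inequality $\sum_k j_k\leqslant d+\mu$, and obtain the closed sum $\mathrm N(d,\mu)=\sum_{\ell_1+\cdots+\ell_d\leqslant\mu}\prod_k g(\ell_k+1)$ directly, from which $G_d(x)=H(x)^d/(1-x)$ with $H(x)=(1-x)G_1(x)$ follows. Your argument is non-inductive and makes the role of nestedness more transparent (it is precisely what makes the appearance level well defined and membership monotone in the indices); the paper's induction is shorter to write and reuses the same recursion machinery for both $G_d$ and $\mathcal G_d$. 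For $\mathcal G_d$ your multinomial expansion is essentially the paper's Cauchy-product induction carried out in one step. The points you flag as potential obstacles (padding $\sum_k j_k$ up to exactly $d+\mu$, and the telescoping identity $H=(1-x)G_1$) both check out.
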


We will include explicit formulas for some specific growth functions $f$ related to usual sequences $\mathcal S$ which are used in practice. Such sequences are for example based on:
\begin{itemize}
\item Equidistant nodes without boundary:
$$
\mathring{\mathrm{E}}(n) = \left.\left\{\frac{2k}{n+1}-1\right| k\in\{1,\ldots,n\}\right\} , n\in \N_{\geqslant 1}
$$
For these points, the sequence given by $S_k = \mathring{\mathrm E}(f(k))$ is nested if $f(k) = m^k-1$ for $m\in\N_{\geqslant 2}$.

\item Equidistant nodes with boundary and Chebyshev nodes of the second kind:
$$
\mathrm{E}(n) = \left.\left\{\frac{2(k-1)}{n-1}-1\right| k\in\{1,\ldots,n\}\right\}, n\in\N_{\geqslant 2}
$$
$$
\mathrm{C}_2(n) = \left.\left\{\cos\left(\frac{k-1}{n-1}\pi\right)\right| k\in\{1,\ldots,n\}\right\}, n\in\N_{\geqslant 2}
$$
The sequences $S_k = \mathrm{E}(f(k))$ and $S_k = \mathrm{C}_2(f(k))$ respectively become nested if $f(k)=m^k+1$, $m\in\N_{\geqslant 2}$. Sometimes it is convenient to define $\mathrm E(1) = \mathrm C_{2}(1) = \{0\}$.

\item Chebyshev nodes of the first kind:
$$
\mathrm{C}_1(n) = \left.\left\{\cos\left(\frac{2k-1}{2n}\pi\right)\right| k\in\{1,\ldots,n\}\right\}, n\in\N_{\geqslant 1}
$$
Here, the sequence $S_k = \mathrm{C}_1(f(k))$ is nested if $f(k) = 3^k$ or $f(k)=3^{k-1}$.
\item Leja nodes (see \cite{Leja1957}): Any sequence $(x_n)_{n\in\mathbb N_{\geqslant 1}}$ such that $x_1\in [-1,1]$ and
$$
x_n \in\argmax\limits_{x\in [-1,1]} \prod_{j=1}^{n-1}|x-x_j|, n>1
$$
is called a Leja-sequence. For such a sequence, we let
$$
\operatorname{L}(n) = \left\{x_k|k\in\{1,\ldots,n\}\right\}.
$$
Note that $S_k = \mathrm{L}(f(k))$ is nested for every choice of a non-decreasing growth function $f$, in particular, for $f(k)=k$. There is a symmetrized version of Leja nodes available which become nested for the growth function $f(k) = 2k-1$ (see \cite{bos2015application}).

%reference 
%[1] Franciszek Leja. Sur certaines suites liées aux ensembles plans et leur application à la représentation conforme. In Annales Polonici Mathematici, volume 1, pages 8–13, 1957.

%Random Leja points
%Camille Pouchol

\end{itemize}

% You can generate these figure with the file \3_Mathematica\ZS_Grid.nb which will call our MATLAB module for creating the grid. Please just adjust the line : 
% SetDirectory["C:\\Users\\jocelyn.minini\\switchdrive\\MetaG\\6_Publications\\1_Counting\\Figures"] to save the figs in your own machine

\begin{example}
Consider a Smolyak grid based on the sequence $\mathcal S$, where $S_k = \mathrm C_1(3^k)$. As we show below, according to \eqref{chebyshev_new}, one obtains
$$\begin{aligned}
\mathrm N_2(\mu) & = 3^{\mu+1}(2\mu+3)\\
\mathrm N_3(\mu) & = 3^{\mu+1}(2\mu^2+10\mu+9)
\end{aligned}$$
so that the cardinalities in Figure \ref{fig:Grid} can be computed.

\begin{figure}[H]
\subfloat[\label{fig:Grid_2D_0}$\Gamma_2(0)$]{\includegraphics[width = 0.33\linewidth]{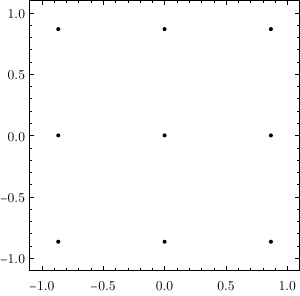}}\hfill
\subfloat[\label{fig:Grid_2D_1}$\Gamma_2(1)$]{\includegraphics[width = 0.33\linewidth]{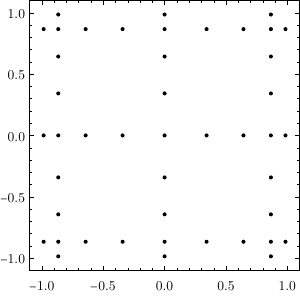}}\hfill
\subfloat[\label{fig:Grid_2D_2}$\Gamma_2(2)$]{\includegraphics[width = 0.33\linewidth]{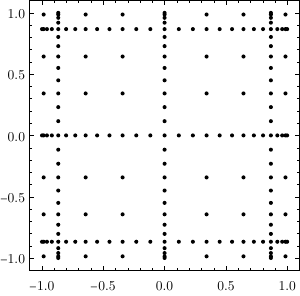}}\\
\subfloat[\label{fig:Grid_2D_0}$\Gamma_3(0)$]{\includegraphics[width = 0.33\linewidth]{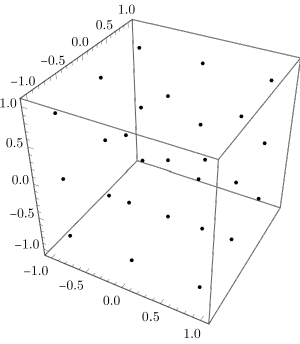}}\hfill
\subfloat[\label{fig:Grid_2D_1}$\Gamma_3(1)$]{\includegraphics[width = 0.33\linewidth]{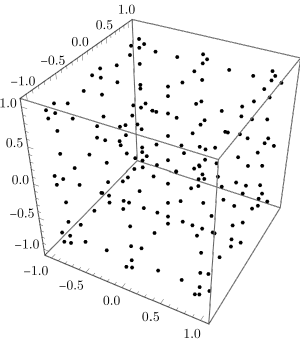}}\hfill
\subfloat[\label{fig:Grid_2D_2}$\Gamma_3(2)$]{\includegraphics[width = 0.33\linewidth]{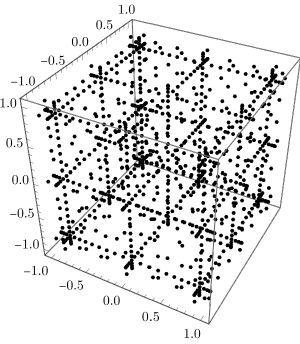}}
\caption{Smolyak grids with cardinality for $d=2$: (a) $9$, (b) $45$ (c) $189$ and in $d=3$: (d) $27$, (e) $189$, (f) $999$ respectively.}
\label{fig:Grid}
\end{figure}

%\begin{figure}[h]
%\begin{center}
%\includegraphics[scale=0.43]{smolyak_chebyshev_mu1}
%\includegraphics[scale=0.43]{smolyak_chebyshev_mu2}
%\includegraphics[scale=0.43]{smolyak_chebyshev_mu3}
%\caption{Smolyak grids $\Gamma(2,1)$, $\Gamma(2,2)$ and $\Gamma(2,3)$ with cardinality $45,189$ and $729$ respectively.}\label{fig_smolyak_2d}
%\end{center}

%\end{figure}

%\begin{figure}[h]
%\begin{center}
%\includegraphics[scale=0.34]{smolyak_chebyshev_mu0-3d}
%\includegraphics[scale=0.34]{smolyak_chebyshev_mu1-3d}
%\includegraphics[scale=0.34]{smolyak_chebyshev_mu2-3d}
%\caption{Smolyak grids  $\Gamma(3,0)$, $\Gamma(3,1)$ and $\Gamma(3,2)$ with cardinality $27,189$ and $999$ respectively.}\label{fig_smolyak_3d}
%\end{center}

%\end{figure}
\end{example}

%In this paper we will provide explicit formulas for the numbers
%$\mathrm N(d,\mu)$, $\mathcal N(d,\mu)$ and $\mathcal N_\Sigma(d,\mu)$, where we focus on the growth functions
%$$
%\begin{aligned}
%k&\mapsto n^k-1\\
%k& \mapsto n^k\\
%k&\mapsto n^k+1\\
%k & \mapsto k
%\end{aligned}
%$$
%Was sind gute growth functions, resp.\ solche die auftreten in der Praxis?
\section{Counting Nodes}
We first consider a nested sequence $\mathcal S$ with growth function $f(k)=|S_k|$. Then we have the following proposition:
\begin{prop}\label{main1}
The generating function
$$
G_d(x) = \sum_{\mu=0}^\infty \mathrm N_d(\mu)x^\mu
$$
satisfies
$G_d(x) = G_1(x)^d(1-x)^{d-1},$ where $\displaystyle G_1(x) = \sum_{\ell=0}^\infty f(\ell+1)x^\ell.$
\end{prop}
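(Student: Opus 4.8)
The plan is to count $\mathrm N(d,\mu,f)$ directly by classifying each node of $\Gamma(d,\mu)$ according to the level at which its coordinates first appear, and then to package these counts into the generating function $G_d$.

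First, for a nested sequence $\mathcal S$ I would record for each point $x\in[-1,1]$ its \emph{first-appearance level} $\ell(x)=\min\{k\mid x\in S_k\}$; since the sequence is nested, the number of points whose first-appearance level equals $\ell$ is exactly $f(\ell)-f(\ell-1)$, with the convention $f(0)=0$. A $d$-tuple $(x_1,\dots,x_d)$ with coordinate levels $(\ell_1,\dots,\ell_d)$ then lies in the product $\bigtimes_k S_{i_k}$ precisely when $i_k\geqslant\ell_k$ for every $k$, again by nestedness.

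The key combinatorial step --- and the one I expect to be the main obstacle, since it is where overcounting must be avoided --- is to show that such a tuple belongs to the union $\Gamma(d,\mu)=\bigcup_{|\mathbf i|=d+\mu}\bigtimes_k S_{i_k}$ if and only if $\sum_{k=1}^d(\ell_k-1)\leqslant\mu$. Indeed, a multi-index $\mathbf i$ with $|\mathbf i|=d+\mu$ and $i_k\geqslant\ell_k$ for all $k$ exists iff the residual budget $d+\mu-\sum_k\ell_k$ is nonnegative, in which case the excess can be distributed arbitrarily among the coordinates. Because each node is thereby counted exactly once through its unique level vector, this yields the clean formula
$$
\mathrm N(d,\mu,f)=\sum_{\substack{m_1,\dots,m_d\geqslant 0\\ m_1+\dots+m_d\leqslant\mu}}\prod_{k=1}^d a_{m_k},\qquad a_m:=f(m+1)-f(m).
$$

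To finish, I would introduce $A(x)=\sum_{m\geqslant 0}a_m x^m$ and observe, by reindexing and using $f(0)=0$, that $A(x)=(1-x)G_1(x)$. The inner sum over $m_1+\dots+m_d=n$ is the $n$-th coefficient of $A(x)^d$, while the outer cumulative condition $\sum_k m_k\leqslant\mu$ amounts to multiplication by the partial-sum factor $1/(1-x)$. Hence $G_d(x)=\tfrac{1}{1-x}A(x)^d=\tfrac{1}{1-x}\bigl((1-x)G_1(x)\bigr)^d=(1-x)^{d-1}G_1(x)^d$, as claimed. Equivalently, the last two steps can be phrased as a dimension-wise induction, using the telescoping base case $\mathrm N(1,\mu,f)=\sum_{m\leqslant\mu}a_m=f(\mu+1)$ and the Cauchy-product recurrence $G_{d+1}(x)=(1-x)G_1(x)\,G_d(x)$, which reflects appending one coordinate whose level-increment generating function is $A(x)$.
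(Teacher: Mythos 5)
Your argument is correct, but it takes a genuinely different route from the paper's. The paper writes $\Gamma(d+1,\mu)$ as the \emph{disjoint} union of $\Gamma(d,\mu)\times S_1$ and the sets $\Gamma(d,\mu-\ell)\times(S_{\ell+1}\setminus S_\ell)$ for $\ell=1,\dots,\mu$, deduces the recursion $\mathrm N(d+1,\mu)=f(1)\,\mathrm N(d,\mu)+\sum_{\ell=1}^{\mu}(f(\ell+1)-f(\ell))\,\mathrm N(d,\mu-\ell)$, converts it via the Cauchy product into $G_{d+1}(x)=(1-x)G_1(x)G_d(x)$, and concludes by induction on $d$. You instead classify all nodes of $\Gamma(d,\mu)$ at once by their vector of first-appearance levels, prove the membership criterion $\sum_{k}(\ell_k-1)\leqslant\mu$ (which is right: a multi-index $\mathbf i$ with $i_k\geqslant\ell_k$ and $|\mathbf i|=d+\mu$ exists iff the budget is nonnegative), and arrive at the closed sum $\mathrm N(d,\mu)=\sum_{m_1+\cdots+m_d\leqslant\mu}\prod_k a_{m_k}$ with $a_m=f(m+1)-f(m)$ before any generating functions appear. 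The two proofs exploit nestedness in the same place --- the difference sets $S_{\ell+1}\setminus S_\ell$ in the paper are exactly the level-$(\ell+1)$ classes of your partition --- so your argument is in effect the $d$-fold unrolling of the paper's one-step recursion, and your closing remark correctly identifies that it collapses back to the same recurrence $G_{d+1}=(1-x)G_1G_d$. What your route buys: an explicit, induction-free combinatorial formula for $\mathrm N(d,\mu)$ valid for any nested $\mathcal S$, with the role of the factor $(1-x)^{d-1}=A(x)^d/((1-x)G_1(x)^{\,d-1}\cdot\text{\ldots})$ --- more transparently, of $A(x)=(1-x)G_1(x)$ and the partial-sum factor $1/(1-x)$ --- made conceptually clear. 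What the paper's route buys: the intermediate recursion \eqref{recursion} as a standalone tool, which is reused later in the paper (Section on further remarks, growth function $f(k)=2^{k-1}+1$) to derive asymptotics by induction on $d$.
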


\begin{proof}
We have
$$\begin{aligned}
\Gamma_{d+1}(\mu) & = \bigcup_{|\mathbf i| = d+1+\mu}\bigtimes_{k=1}^{d+1} S_{i_k}
 = \bigcup_{\ell = 0}^{\mu} \Gamma_d(\mu-\ell) \times S_{\ell+1}\\
& = \Gamma_d(\mu)\times S_{1}\cup \bigcup_{\ell = 1}^{\mu}  \Gamma_d(\mu-\ell) \times S_{\ell+1}\\
& =\Gamma_d(\mu)\times S_{1}\cup \bigcup_{\ell = 1}^{\mu}  \Gamma_d(\mu-\ell) \times \left(S_{\ell+1}\setminus S_{\ell}\right),
\end{aligned}
$$
where the key point is that the last line writes $\Gamma_{d+1}(\mu)$ as a \emph{disjoint} union because of the nestedness of $\mathcal S$. This idea already appears in \cite{muller-gronbach_hyperbolic_1998,novak_simple_1999,ullrich}. It follows that
\begin{equation}\label{recursion}
\mathrm N_{d+1}(\mu) = f(1) \cdot \mathrm N_d(\mu) + \sum_{\ell=1}^\mu (f(\ell+1)-f(\ell))\mathrm N_d(\mu-\ell),
\end{equation}
from which we deduce using the Cauchy product, since $\mathrm N_d(k)=0$ provided $k<0$,
$$
\begin{aligned}
G_{d+1}(x) & = f(1) \cdot G_d(x) +\sum_{\mu=1}^\infty \left(\sum_{\ell=0}^{\mu-1}(f(\ell+2)-f(\ell+1)) \mathrm N_d(\mu-1-\ell)\right)x^\mu\\
& = f(1)\cdot G_d(x) + x \sum_{\mu=0}^\infty\left(\sum_{\ell=0}^{\mu}(f(\ell+2)-f(\ell+1)) \mathrm N_d(\mu-\ell)\right)x^{\mu}\\
& = f(1)\cdot G_d(x) + x \left(\sum_{\ell=0}^\infty(f(\ell+2)-f(\ell+1))x^\ell\right)\left(\sum_{\mu=0}^\infty \mathrm N_d(\mu)x^\mu\right).
\end{aligned}
$$
Since 
$$
\sum_{\ell=0}^\infty(f(\ell+2)-f(\ell+1))x^\ell = G_1(x)-f(1)-x\, G_1(x),
$$
we conclude that
$$
\begin{aligned}
G_{d+1}(x) & = f(1)\cdot G_d(x) + \left(G_1(x)-f(1)-x \,G_1(x)\right)G_d(x)
\\ & = G_d(x)G_1(x)(1-x),
\end{aligned}
$$
from which the claim follows by induction on $d$.
\end{proof}

Let now $\mathcal S$ be a (not necessarily nested) sequence with growth function $f$ and consider the generating function
$$
\mathcal G_d(x) = \sum_{\mu=0}^\infty \mathcal N_d(\mu)x^\mu.
$$
\begin{prop}\label{main2}
It holds that $
\mathcal G_d(x)= \mathcal G_1(x)^d$, where
$\displaystyle \mathcal G_1(x) = \sum_{\mu = 0}^\infty f(\mu+1)x^\mu$.
\end{prop}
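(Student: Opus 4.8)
The plan is to mirror the dimension-wise induction used in Proposition \ref{main1}, but the argument is simpler here because, counting with multiplicity, there is no nestedness to exploit and hence no set-difference step. First I would establish the base case $d=1$: since $|\mathbf i| = 1+\mu$ forces $i_1 = \mu+1$, we have $\mathcal N(1,\mu,f) = f(\mu+1)$, whence $\mathcal G_1(x) = \sum_{\mu=0}^\infty f(\mu+1)x^\mu$, matching the claimed formula in the case $d=1$.

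Next I would derive a one-step recursion in the dimension. Starting from $\mathcal N(d+1,\mu,f) = \sum_{|\mathbf i| = d+1+\mu} \prod_{k=1}^{d+1} f(i_k)$ over multi-indices $\mathbf i \in \mathbb N_{\geqslant 1}^{d+1}$, I split off the last coordinate by writing $i_{d+1} = \ell+1$ with $\ell \in \{0, \ldots, \mu\}$; the remaining $d$ coordinates then form a multi-index of $1$-norm $d + (\mu-\ell)$, so that
$$
\mathcal N(d+1,\mu,f) = \sum_{\ell=0}^\mu f(\ell+1)\, \mathcal N(d,\mu-\ell,f).
$$
Compared with \eqref{recursion}, note that the weight here is simply $f(\ell+1)$ rather than $f(\ell+1) - f(\ell)$, reflecting the absence of the disjointness/telescoping argument.

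I would then translate this into generating functions via the Cauchy product. Since $\mathcal N(d,k,f) = 0$ for $k < 0$, multiplying by $x^\mu$ and summing over $\mu \geqslant 0$ gives
$$
\mathcal G_{d+1}(x) = \left(\sum_{\ell=0}^\infty f(\ell+1)x^\ell\right)\left(\sum_{\mu=0}^\infty \mathcal N(d,\mu,f)x^\mu\right) = \mathcal G_1(x)\,\mathcal G_d(x),
$$
and the claim $\mathcal G_d(x) = \mathcal G_1(x)^d$ follows by induction on $d$.

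I do not expect any serious obstacle: the only points requiring care are the correct range $\ell \in \{0,\ldots,\mu\}$ when splitting off the last coordinate (dictated by $i_{d+1} \geqslant 1$ together with $\mu - \ell \geqslant 0$) and the standard justification of the Cauchy product, both of which are routine. An alternative, non-inductive route would be to shift $j_k = i_k - 1$ directly, recognizing $\mathcal N(d,\mu,f)$ as the $d$-fold convolution $\sum_{j_1+\cdots+j_d = \mu} \prod_{k=1}^d f(j_k+1)$, which is by definition the coefficient of $x^\mu$ in $\mathcal G_1(x)^d$; this yields the result in a single step.
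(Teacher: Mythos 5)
Your proof is correct and follows essentially the same route as the paper: split off the last coordinate to obtain the convolution recursion $\mathcal N(d+1,\mu)=\sum_{\ell} f(\ell+1)\mathcal N(d,\mu-\ell)$ (the paper's indexing $\sum_{\ell=1}^{\mu+1}f(\ell)\mathcal N(d,\mu+1-\ell)$ is the same sum reindexed), then apply the Cauchy product and induct on $d$. The direct $d$-fold convolution remark is a nice shortcut but does not change the substance.
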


\begin{proof}
We start by observing that
$$\begin{aligned}
\mathcal N_{d+1}(\mu)& = \sum_{|\mathbf i|=d+1+\mu}\prod_{k=1}^{d+1}f(i_k) = \sum_{|\mathbf i|=d+1+\mu}\left(\prod_{k=1}^{d}f(i_k)\right) f(i_{d+1})\\
& = \sum_{\ell=1}^{\mu+1}f(\ell)\mathcal N_d(\mu+1-\ell)
\end{aligned}$$
so that using the Cauchy product
$$\begin{aligned}
\mathcal G_{d+1}(x) & = \sum_{\mu=0}^\infty \sum_{\ell=0}^{\mu}f(\ell+1)\mathcal N_d(\mu-\ell)x^\mu
 =\left(\sum_{\ell=0}^{\infty}f(\ell+1)x^{\ell}\right)\left(\sum_{\mu=0}^\infty\mathcal N_d(\mu)x^{\mu}\right)\\
& = \mathcal G_1(x)\mathcal G_d(x)\end{aligned}$$
and hence the claim follows by induction on $d$.\end{proof}

%Note that the formulas for $\mathcal N(d,\mu,f)$ that are obtained using $\mathcal G_d$ count the number of nodes in the corresponding grid in the case where the sequence $\mathcal S$ is \emph{strictly non-nested}, i.e.\ if $S_k\cap S_j=\emptyset$ for all $k,j\in \mathbb N_{\geqslant 1}$ such that $k\ne j$.

When discussing the applications of our main result, we will focus on nested sequences $\mathcal S$ but note that the formulas $\mathcal N_d(\mu)$ and $\mathcal N^\Sigma_d(\mu)$ do not depend on the nestedness or non-nestedness of $\mathcal S$. The general recipe for obtaining an expression for $\mathrm N_d(\mu)$ or $\mathcal N_d(\mu)$ consists in evaluating $G_d$ and $\mathcal G_d$ respectively and then -- for the growth functions under consideration --  one can use Newton's binomial theorem and the Cauchy product in order read off $\mathrm N_d(\mu)$ or $\mathcal N_d(\mu)$. We will provide the details for the first three growth functions under consideration -- the others being obtained similarly:

\subsection{Growth Function $f(k) = n^{k}-1$}

In this case, $\displaystyle
G_1(x) = \frac{n-1}{(1-x)(1-nx)},
$
so that using Newton's binomial theorem
$$\begin{aligned}
G_d(x) & = (n-1)^d\left(\frac{1}{1-nx}\right)^d\frac{1}{1-x}\\
& = (n-1)^d\left(\sum_{k=0}^\infty\binom{d+k-1}{k}(n x)^k\right)\left(\sum_{\ell=0}^\infty x^\ell\right).
\end{aligned}$$
Reading off the coefficient of $x^\mu$ using the Cauchy product yields
$$
\mathrm N_d(\mu) =(n-1)^d \sum_{k=0}^\mu\binom{d+k-1}{k}n^k.
$$
\begin{remark}
If $n=2$, one can recover the formula given by \citet[Lemma 3.6]{Bungartz_Griebel_2004} by exploting the fact that
$$
\binom{d-1+k}{k}=\binom{d-1+k}{d-1}.
$$
and observing that there, the formula is stated for $\mu-1$.
\end{remark}
Smiliarly we obtain
$$\begin{aligned}
\mathcal G_d(x) & = (n-1)^d \left(\frac{1}{1-x}\right)^d\left(\frac{1}{1-nx}\right)^d\\
& = (n-1)^d \left(\sum_{k=0}^\infty \binom{d+k-1}{k}(nx)^k\right)\left(\sum_{\ell=0}^\infty  \binom{d+\ell-1}{\ell}x^\ell\right).
\end{aligned}$$
The coefficient of the term $x^\mu$ is therefore $$
\mathcal N_d(\mu) = (n-1)^d\sum_{k=0}^\mu \binom{d+k-1}{k}\binom{d+\mu-k-1}{\mu-k}n^k.
$$
\subsection{Growth Function $f(k) = n^k$}
Here,
$\displaystyle
G_1(x) = \frac{n}{1-nx},
$
so that
$$\begin{aligned}
G_d(x) & = n^d \left(\frac{1}{1-nx}\right)^d(1-x)^{d-1}\\
& = n^d \left(\sum_{k=0}^{d-1} \binom{d-1}{k}(-1)^kx^k\right)\left(\sum_{\ell=0}^\infty\binom{d+\ell-1}{\ell}(n x)^\ell\right).
\end{aligned}$$
Reading off the coefficient of $x^\mu$ leads to
\begin{equation}\label{chebyshev_new}
\mathrm N_d(\mu) =n^d\sum_{k=0}^{\min\{d-1,\mu\}}\binom{d-1}{k}(-1)^k\binom{d+\mu-k-1}{\mu-k}n^{\mu-k},
\end{equation}
which can be shown to be equivalent to
\begin{equation}\label{chebyshev_new_2}
\mathrm N_d(\mu) = \sum_{k=0}^{\min\{d-1,\mu\}}\binom{d-1}{k}\binom{\mu}{k}n^{\mu+d-k}(n-1)^{k}.
\end{equation}

\begin{remark}
\begin{enumerate}

\item If one takes $f(k)=n^{k-1}$ in \eqref{chebyshev_new_2}, then one obtains the same result up to a factor $n^d$ so that in this case, if $n=2$, one obtains

$$
\mathrm N_d(\mu) = \sum_{k=0}^{\min\{d-1,\mu\}}\binom{d-1}{k}\binom{\mu}{k}2^{\mu-k},
$$

which equals exactly the formula found by \citet[Lemma 2]{ullrich}.
\item Formula \eqref{chebyshev_new_2} with $n=3$ gives an explicit expression for $\mathrm N_d(\mu)$ for the case of a Smolyak-grid constructed with Chebyshev points of the first kind. If $f(k)=3^{k-1}$ is used instead, one obtains the same formula up to a factor $3^d$.
\end{enumerate}
\end{remark}
Similarly,
$$
\mathcal G_d(x) = n^d\left(\frac{1}{1-n x}\right)^d = n^d\left(\sum_{k=0}^\infty \binom{d+k-1}{k}(nx)^k\right)
$$
so that the coefficient of $x^\mu$ can be read off and one obtains
$$
\mathcal N_d(\mu) = n^{d+\mu}\binom{d+\mu-1}{\mu}.
$$
\begin{remark}
This formula can be obtained in a much simpler way: If $f(k) = n^k$ one has
$$
\mathcal N_d(\mu) = \sum_{|\mathbf i|=d+\mu}n^{|\mathbf i|} = n^{d+\mu}\left.\left|\left\{\mathbf i\in \mathbb N_{\geqslant 1}^d\right| |\mathbf i|=d+\mu\right\}\right|
$$
and the equality$$
\left.\left|\left\{\mathbf i\in \mathbb N_{\geqslant 1}^d\right| |\mathbf i|=d+\mu\right\}\right|=\binom{d+\mu-1}{\mu}$$
can be obtained using a stars and bars argument.
\end{remark}
\subsection{Growth Function $f(k) = n^{k}+1$}
Here,
$\displaystyle
G_1(x) = \frac{n+1-2nx}{(1-x)(1-nx)},
$
so that
%$$\begin{aligned}
%G_d(x) & = (n+1-2nx)^d\left(\frac{1}{1-nx}\right)^d\frac{1}{1-x}\\
%& = \left(\sum_{k=0}^d\binom{d}{k}(-2n)^k(n+1)^{d-k}x^k\right)\left(\sum_{i=0}^\infty\binom{d-1+i}{i}(n x)^i\right)\left(\sum_{j=0}^\infty x^j\right)\\
%& =  \left(\sum_{k=0}^d\binom{d}{k}(-2n)^k(n+1)^{d-k}x^k\right)\left(\sum_{\ell=0}^\infty\left( \sum_{j=0}^\ell \binom{d-1+j}{j}n^j\right)x^\ell\right).
%\end{aligned}$$
%Reading off the coefficient of $x^\mu$ yields
$$
\mathrm N_d(\mu)=\sum_{k=0}^{\min\{d,\mu\}} \binom{d}{k}(-2n)^k(n+1)^{d-k} \sum_{\ell=0}^{\mu-k}\binom{d+\ell-1}{\ell}n^\ell.
$$
\begin{remark}
Note that for the case $n=2$ and $f(k)=2^k+1$, a formula is already available in \cite{Bungartz_Griebel_2004} which thinks of this case as upgrading the case of the growth function $g(k)=2^k-1$ by two boundary points. Counting the $j$-skeleta of the $d$-dimensional hypercube yields $\binom{d}{j}2^{d-j}$ for fixed $j$. Then the formula for $f$ is obtained by building the sum of the formula for $g$ over all $j$-skeleta as $j=0,\ldots, d$. This approach -- for general $n$ -- yields the equivalent formula
$$
\mathrm N_d(\mu)=\sum_{k=0}^{d} \binom{d}{k}2^{d-k}(n-1)^k\sum_{\ell=0}^\mu \binom{k+\ell-1}{k-1}n^\ell.
$$

\end{remark}
Furthermore, in this case,
$\displaystyle
\mathcal G_d(x) = (n+1-2nx)^d\left(\frac{1}{1-nx}\right)^d\left(\frac{1}{1-x}\right)^d,
$
so that
%$$\begin{aligned}
%\mathcal G_d(x) & = \sum_{k=0}^d\binom{d}{k}(-2nx)^k(n+1)^{d-k}\left(\sum_{i=0}^\infty \binom{d+i-1}{i}(nx)^i\right)\left(\sum_{j=0}^\infty  \binom{d+j-1}{j}x^j\right)\\
%& = \sum_{k=0}^d\binom{d}{k}(-2nx)^k(n+1)^{d-k}\left(\sum_{\nu=0}^\infty \sum_{\ell=0}^\nu \binom{d+\ell-1}{\ell}\binom{d+\nu-\ell-1}{\nu-\ell}n^\ell x^\nu\right)\end{aligned}
%$$
%Reading off the coefficient of $x^\mu$ leads to
$$
\mathcal N_d(\mu) =\!\!\!
\sum_{\ell = 0}^{\min\{d,\mu\}}\sum_{k=0}^{\mu-\ell} \binom{d}{\ell}(-1)^\ell (2n)^\ell(n+1)^{d-\ell}\binom{d+k-1}{k}\binom{d+\mu-\ell-k-1}{\mu-\ell-k}n^k.
$$

\subsection{Growth Function $f(k) = k$}
Here, $G_1(x) = \frac{1}{(1-x)^2}$ so that
$\displaystyle
G_d(x) = \frac{1}{(1-x)^{d+1}}= \sum_{k=0}^\infty \binom{d+k}{k}x^k
$
and hence
$$
\mathrm{N}_d(\mu) = \binom{d+\mu}{\mu}.
$$
Similarly, we obtain
$$
\mathcal G_d(x) = \frac{1}{(1-x)^{2d}}= \sum_{k=0}^\infty \binom{2d+k-1}{k}x^k
$$
and hence
$$
\mathcal{N}_d(\mu) = \binom{2d+\mu-1}{\mu}.
$$
In this case, we obtain setting $m=\max\{0,\mu+1-d\}$:
$$
\mathcal{N}^{\Sigma}_d(\mu) =\frac{1}{2d}\left((1+\mu)\binom{2d+\mu}{\mu+1}-m\binom{2d+m-1}{m}\right)
$$

\subsection{Further Remarks}

If $f(k) = 2^{k-1}+1$ for $k>1$ and $f(1)=1$, one can show by induction on $d$ using~\eqref{recursion} that
$\mathrm N_d(\mu)-1 = 2^{\mu} P_{d-1}(\mu),
$
where $P_{d-1}$ is a polynomial of degree $d-1$ with leading coefficient $1/({2^{d-1}(d-1)!})$ so that one obtains the asymptotics
$$
\mathrm N_d(\mu) \sim\color{black} \frac{2^{\mu}}{2^{d-1}(d-1)!}\mu^{d-1}
$$
for $d$ fixed and $\mu\to\infty$, compare \citet[Lemma 1]{muller-gronbach_hyperbolic_1998}.

Using $f(k) = 2k-1$ yields
$\displaystyle
G_1(x) = \frac{1+x}{(1-x)^2}
$
so that
$\displaystyle
G_d(x) = \frac{(1+x)^d}{(1-x)^{d+1}}
$
and hence
$$
\mathrm N_d(\mu) = \sum_{k=0}^{\min\{d,\mu\}}\binom{\mu}{k}\binom{\mu+d-k}{\mu}
$$
which is precisely Theorem 2 of \cite{novak_simple_1999}. We note that this formula is also obtained using generating functions.

\section*{Acknowledgments}

This study has been financed by the ``Ingénierie et Architecture'' domain
of HES-SO, University of Applied Sciences Western Switzerland,
which is acknowledged. We further thank the referees for their careful reading and their valuable comments and suggestions.

\end{document}